\newcommand{\Hmf}{\operatorname{Hmf}}
\newcommand{\CM}{\operatorname{CM}}
\newcommand{\Mat}{\operatorname{Mat}}
\newcommand{\Yvo}{\Yv^{\circ}}
\newcommand{\dirscE}{\scE^{\to}}
\newcommand{\dirscF}{\scF^{\to}}
\title{Hyperplane sections and stable derived categories}
\author{Kazushi Ueda}
\date{}
\begin{document}

\maketitle

\begin{abstract}
We discuss the relation
between the graded stable derived category of a hypersurface and
that of its hyperplane section.
The motivation comes from the compatibility
between homological mirror symmetry
for the Calabi-Yau manifold
defined by an invertible polynomial
and that
for the singularity
defined by the same polynomial.
\end{abstract}

\section{Introduction}

Let $(Y, \scO_Y(1))$ be a polarized smooth projective variety
of dimension $d$
over a field $\bsk$,
and $X = s^{-1}(0)$ be a smooth hypersurface
defined by a section $s \in H^0(\scO_Y(a))$
of degree $a$.
For coherent sheaves $\scE$ and $\scE'$ on $X$
restricted from those on $Y$,
the push-forward functor
$
 \iota_* : D^b \coh X \to D^b \coh Y
$
along the inclusion $\iota : X \hookrightarrow Y$
satisfies
\begin{align} \label{eq:ext_coh}
 \Hom_Y^i(\iota_* \scE, \iota_* \scE')
  &\cong \Hom_X^i(\scE, \scE')
   \oplus \Hom_X^{d-i}(\scE', \scE \otimes \omega_Y|_X)^\vee,
\end{align}
where
$\omega_Y|_X = \iota^* \omega_Y$ is the restriction
of the dualizing sheaf of $Y$ to $X$
and $\bullet^\vee$ denotes the $\bsk$-dual vector space.
In particular,
if $Y$ has the trivial dualizing sheaf,
then one has
$$
 \Hom_Y^i(\iota_* \scE, \iota_* \scE')
  \cong \Hom_X^i(\scE, \scE')
   \oplus \Hom_X^{d-i}(\scE', \scE)^\vee.
$$
If $\omega_Y \cong \scO_Y(r)$ for some $r \in \bZ$ and
$H^i(\scO_Y(n)) = 0$ for any $n \in \bZ$
unless $i = 0, d$,
then the graded ring
$
 \Sbar = \bigoplus_{n=0}^\infty H^0(\scO_Y(n))
$
is {\em Gorenstein} with {\em $a$-invariant} $r$,
in the sense that $S$ has finite injective dimension as a graded $S$-module
and the graded canonical module $K_S$
is isomorphic to the free module $S(r)$
\cite[Lemma 2.12]{Orlov_DCCSTCS}.
The latter condition is equivalent to the isomorphism
$
 \RHom_\Sbar(\bsk, \Sbar) \cong \bsk(-r)[-d-1].
$
Here the round bracket and the square bracket indicate
the shift in the {\em internal} and {\em homological} grading
respectively.

The {\em graded stable derived category}
of $\Sbar$ is defined as the quotient category
$$
 D^b_\sing(\gr \Sbar)
  := D^b(\gr \Sbar) / D^\perf(\gr \Sbar)
$$
of the bounded derived category
of finitely-generated graded $\Sbar$-modules
by the full triangulated subcategory $D^\perf(\gr \Sbar)$
consisting of perfect complexes
(i.e., bounded complexes of finitely-generated projective modules)
\cite{Buchweitz_MCM,Happel_GA,Krause_SDCNS,Orlov_TCS}.
Orlov \cite[Theorem 2.13]{Orlov_DCCSTCS} has shown the existence of
\begin{itemize}
 \item
a full and faithful functor
$\Psi_\Sbar : D^b_\sing(\gr \Sbar) \hookrightarrow D^b \coh Y$
if $r < 0$,
 \item
an equivalence
$
 \Psi_\Sbar :D^b_\sing(\gr \Sbar) \simto D^b \coh Y
$
if $r = 0$, and
 \item
a full and faithful functor
$
 \Psi_\Sbar : D^b \coh Y \hookrightarrow D^b_\sing(\gr \Sbar)
$
if $r > 0$.
\end{itemize}

The graded ring
$
 \Rbar = \bigoplus_{n=0}^\infty H^0(\scO_X(n))
$
is the quotient ring of $\Sbar$
by the principal ideal generated by $s$.
Let $\Phi_{\gr} : \gr \Rbar \to \gr \Sbar$ be the functor
sending a graded $\Rbar$-module
to the same module
considered as a graded $\Sbar$-module.
Since $\Rbar$ is perfect as an $\Sbar$-module,
the functor $\Phi_{\gr}$ induces the push-forward functor
$$
 \Phi_\sing : D^b_\sing(\gr \Rbar) \to D^b_\sing(\gr \Sbar)
$$
in the stable derived categories.
When the $a$-invariant of $\Rbar$ is $1$ and
$Y$ is Calabi-Yau,
then
\cite[Theorem 1.1]{Kobayashi-Mase-Ueda_EUS}
states that the functor
$$
 \Psi_\Sbar \circ \Phi_\sing \circ \Psi_\Rbar
  : D^b \coh X \to D^b \coh Y
$$
is isomorphic to the push-forward functor
$$
 \iota_* : D^b \coh X \to D^b \coh Y.
$$
In particular,
if two objects $\scE, \scE'$ in $D^b_\sing(\gr \Rbar)$
are in the image of $\Psi_\Rbar$,
then \eqref{eq:ext_coh} implies
\begin{align} \label{eq:ext_K3}
 \Hom^i(\Phi_\sing(\scE), \Phi_\sing(\scE'))
  \cong \Hom^i(\scE, \scE')
   \oplus \Hom^{d-i}(\scE', \scE)^\vee.
\end{align}
On the other hand,
the semiorthogonal complement
of the image $\Psi_\Rbar$
is generated by the structure sheaf $\Rbar / \frakm_{\Rbar}$ of the origin,
which goes to $\scO_Y[d]$
by the composition $\Psi_\Sbar \circ \Phi_\sing$.
Since $\Rbar / \frakm_{\Rbar}$ is exceptional and $\scO_Y[d]$ is spherical,
\eqref{eq:ext_K3} holds also in this case.

In this paper,
we prove
a graded stable derived category analog of \eqref{eq:ext_coh}
in the following situation:
Let $R$ be a graded regular ring with $a$-invariant $r$
and Krull dimension $d$, and
$S = R \otimes_\bsk \bsk[w]$ be the tensor product of $R$
with the polynomial ring in one variable $w$
of degree $a$.
We identify the ring $R$
with its image by the natural injection $R \hookrightarrow S$.
Let further $f \in R_h$ be a homogeneous element of $R$ of degree $h$,
and $F = f + w g \in S_h$ be a homogeneous element of $S$
of the same degree as $f$.
We will always assume that $g \in w S$.
The corresponding quotient rings will be denoted by
$\Sbar = S / (F)$ and $\Rbar = R / (f) \cong \Sbar / (w)$.

\begin{theorem} \label{th:main}
For any objects $\scE$ and $\scE'$ of $D^b_\sing(\gr \Rbar)$,
one has
$$
 \Hom^i(\Phi_\sing(\scE), \Phi_\sing(\scE'))
  \cong \Hom^i(\scE, \scE')
   \oplus \Hom^{d-i}(\scE', \scE(r+h-a))^\vee.
$$
\end{theorem}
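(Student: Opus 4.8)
The plan is to recognise $\Phi_\sing$ as the derived push-forward along the closed subscheme $\{w=0\} \subset \operatorname{Spec}\Sbar$ — which, via the identification $\Rbar \cong \Sbar/(w)$, is a principal, hence regular, divisor — and to run the usual ``self-intersection plus Serre duality'' argument inside the singularity categories, in parallel with the proof of \eqref{eq:ext_coh}.

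First I would set up the adjunction. Assuming $f \neq 0$ (so that $w$ is a nonzerodivisor on $\Sbar$; the case $f = 0$ is degenerate and can be excluded), the module $\Rbar = \Sbar/(w)$ has the length-one free resolution $0 \to \Sbar(-a) \xrightarrow{\,w\,} \Sbar \to \Rbar \to 0$ and is therefore perfect over $\Sbar$. Consequently both $\Phi_{\gr}$ (restriction of scalars along $\Sbar \twoheadrightarrow \Rbar$) and its right adjoint $\RHom_\Sbar(\Rbar, -)$ preserve perfect complexes, so the adjunction descends to the Verdier quotients: $\Phi_\sing$ acquires a right adjoint $\Phi_\sing^{!}$ computed by $\RHom_\Sbar(\Rbar, -)$, and one has $\Hom^i(\Phi_\sing \scE, \Phi_\sing \scE') \cong \Hom^i_{D^b_\sing(\gr \Rbar)}(\scE, \Phi_\sing^{!} \Phi_\sing \scE')$.

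Next I would compute $\Phi_\sing^{!}\Phi_\sing \scE'$. Applying $\Hom_\Sbar(-, \Phi_{\gr}\scE')$ to the resolution above yields the two-term complex $\Phi_{\gr}\scE' \xrightarrow{\,\cdot w\,} (\Phi_{\gr}\scE')(a)$ placed in degrees $0$ and $1$; since $w$ acts by zero on every $\Rbar$-module the differential vanishes, giving the self-intersection formula $\Phi_\sing^{!}\Phi_\sing \scE' \cong \scE' \oplus \scE'(a)[-1]$ — the graded analogue of $\iota^{!}\iota_* \scE' \cong \scE' \oplus \scE' \otimes N_{X/Y}[-1]$, the role of the normal bundle being played here by $\Rbar(a)$. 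Hence
\[
 \Hom^i(\Phi_\sing \scE, \Phi_\sing \scE') \;\cong\; \Hom^i(\scE, \scE') \;\oplus\; \Hom^{i-1}(\scE, \scE'(a)).
\]
The final step is to rewrite the second summand by Serre duality in the singularity category of the Gorenstein ring $\Rbar = R/(f)$, which has Krull dimension $d-1$ and $a$-invariant $r+h$; the resulting duality converts $\Hom^{i-1}(\scE, \scE'(a))$ into the asserted term $\Hom^{d-i}(\scE', \scE(r+h-a))^\vee$, and combining the three steps proves the theorem.

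I expect the last step to be the main obstacle. One must formulate Serre (equivalently, local or Grothendieck) duality for $D^b_\sing(\gr \Rbar)$ in a form valid for arbitrary objects — in particular \emph{without} an isolated-singularity hypothesis, so that the Hom groups may be infinite-dimensional and $(-)^\vee$ must be read as the graded dual — and to pin down the homological and internal shifts precisely; note that the twist $r+h-a$ that appears is not the $a$-invariant of $\Rbar$ but that of $\Sbar$, arising from combining the $(a)$-twist of the self-intersection formula with the duality twist $r+h$ of $\Rbar$. A secondary point to verify is that the splitting $\Phi_\sing^{!}\Phi_\sing \cong \operatorname{id} \oplus (a)[-1]$ is compatible with the unit of the adjunction, so that the decomposition obtained above is genuinely the claimed one, with the first summand the ``diagonal'' copy of $\Hom^i(\scE, \scE')$.
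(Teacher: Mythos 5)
Your argument is correct and arrives at exactly the same intermediate decomposition as the paper, but by a genuinely different route. The paper never invokes the adjunction $(\Phi_\sing,\Phi_\sing^{!})$: it first writes the push-forward explicitly as a rank-$2m$ graded matrix factorization $(\phitilde,\psitilde)$ of $F$ (Proposition \ref{pr:mf_push-forward}, obtained by splicing the Koszul resolution $0\to\Sbar(-a)\xto{w}\Sbar\to\Rbar\to 0$ into the $2$-periodic resolution), and then analyses a morphism $(\alpha,\beta)$ between two such factorizations block by block, using the homotopies $\xi,\eta$ to strip all $w$-dependence from $\alpha_1,\alpha_2,\beta_1,\beta_2$ and the cocycle conditions \eqref{eq:cocycle1}--\eqref{eq:cocycle2} to solve for $\alpha_3,\alpha_4,\beta_3,\beta_4$. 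The outcome is your splitting in the form $\Hom(\scF,\scF')\cong\Hom(\scE,\scE')\oplus\Hom(\scE[1](-a),\scE')$, which agrees with your $\Hom^i(\scE,\scE')\oplus\Hom^{i-1}(\scE,\scE'(a))$. Your computation of $\RHom_\Sbar(\Rbar,\Phi_{\gr}\scE')\cong\scE'\oplus\scE'(a)[-1]$ is a clean and correct substitute for all of that block algebra: the differential really is zero because $w$ kills every $\Rbar$-module, and the adjunction does descend to the Verdier quotients since $\RHom_\Sbar(\Rbar,-)$ preserves perfect complexes (note also that the exactness of the Koszul resolution, i.e.\ $w$ being a nonzerodivisor on $\Sbar$, is already needed for the paper's assertion that $\Rbar$ is perfect over $\Sbar$, so you are assuming nothing extra). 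What the abstract route does not give you, and the explicit one does, is a concrete cocycle representative for each summand; the paper's introduction uses this to identify $\dirscE$ as a directed subcategory of the image, which is harder to read off from an abstract splitting of $\Phi_\sing^{!}\Phi_\sing$. Both proofs then finish with the same black box, graded Auslander--Reiten/Serre duality in $D^b_\sing(\gr\Rbar)$.

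One caution on the step you yourself single out as the main obstacle. The duality the paper actually invokes is $\Hom(\scM,\scN)\cong\Hom(\scN,\scM(r+h)[d-1])^\vee$ with $d=\dim R$, so the homological shift used is $[\dim\Rbar]$ rather than the $[\dim\Rbar-1]$ appearing in the usual statement of the Serre functor $(\text{$a$-invariant})[\dim-1]$ for the graded stable category of a Gorenstein isolated singularity. You must commit to one convention and carry it through: with the paper's form you land on $\Hom^{d-i}(\scE',\scE(r+h-a))^\vee$ as claimed, whereas the shift $[d-2]$ would produce $\Hom^{d-1-i}$. Your other caveat, that finite-dimensionality of the stable Hom-spaces (in practice an isolated-singularity hypothesis on $\Rbar$) is needed for the duality to apply to arbitrary objects, is likewise a genuine hypothesis that the paper leaves implicit; flagging it is appropriate, not a defect of your argument.
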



In particular,
when the $a$-invariant $r+h$ of $\Rbar$
and the degree $a$ of the variable $w$ coincides
(i.e., when $\Sbar$ has the trivial canonical module),
then one has
\begin{align} \label{eq:CY_completion}
 \Hom^i(\Phi_\sing(\scE), \Phi_\sing(\scE'))
  \cong \Hom^i(\scE, \scE')
   \oplus \Hom^{d-i}(\scE', \scE)^\vee.
\end{align}

The motivation for Theorem \ref{th:main} comes
from the compatibility
between homological mirror symmetry
for the singularity
defined by an invertible polynomial
and that for the Calabi-Yau manifold
defined by the same polynomial.
An integer $(d+1) \times (d+1)$-matrix
$(a_{ij})_{i, j = 1}^{d+1}$ with non-zero determinant
defines a polynomial $f \in \bsk[x_1, \dots, x_{d+1}]$ by
$$
 f = \sum_{i = 1}^{d+1} x_1^{a_{i 1}} \cdots x_{d+1}^{a_{i, d+1}},
$$
which is called {\em invertible}
if it has an isolated critical point at the origin.
Invertible polynomials play essential role
in transposition mirror symmetry
of Berglund and H\"{u}bsch
\cite{Berglund-Hubsch},
which has attracted much attention recently
(see e.g. \cite{Borisov_BHMSVA, Chiodo-Ruan_LGCYSSI, Krawitz}
and references therein).
The quotient ring
$\Rbar = \bsk[x_1, \dots, x_{d+1}] / (f)$ is naturally graded
by the abelian group $L$
generated by $d + 2$ elements $\vecx_i$ and $\vecc$ with relations
$$
 a_{i 1} \vecx_1 + \dots + a_{i, {d+1}} \vecx_{d+1} = \vecc,
  \qquad i = 1, \dots, d+1.
$$
Homological mirror symmetry
\cite{Kontsevich_HAMS}
for invertible polynomials
\cite{Takahashi_WPL}
is a conjectural equivalence
\begin{align} \label{eq:hms_sing}
 D^b_\sing(\gr \Rbar) \cong D^b \Fuk \fv
\end{align}
of triangulated categories.
Here $\Fuk \fv$ is the {\em Fukaya-Seidel category}
\cite{Seidel_PL}
of the exact symplectic Lefschetz fibration
obtained by Morsifying the Berglund-H\"{u}bsch transpose
$
 \fv = \sum_{i = 1}^{d+1} x_1^{a_{1i}} \cdots x_{d+1}^{a_{d+1,i}}
$
of $f$.
The equivalence \eqref{eq:hms_sing} is proved
when $\fv$ is the Sebastiani-Thom sum
of polynomials of types A or D
\cite{Futaki-Ueda_BP, Futaki-Ueda_Dn}.

Assume that one can add one more term
to $f$ and obtain another invertible polynomial
$$
 F = f + x_1^{a_{d+2, 1}} \cdots x_{d+2}^{a_{d+2, d+2}}
  \in \bsk[x_1, \ldots, x_{d+2}],
$$
with a suitable $L$-grading on $x_{d+2}$
such that $F$ is homogeneous of degree $\vecc$
and the quotient ring
$
 \Sbar = \bsk[x_1, \ldots, x_{d+2}] / (F)
$
is Gorenstein with the trivial $a$-invariant;
$
 K_\Sbar \cong \Sbar.
$

The zero locus
$F^{-1}(0) = \Spec \Sbar$
has an action of $K = \Hom(L, \bG_m)$
coming from the $L$-grading on $\Sbar$,
and the quotient stack
$
 Y = [(F^{-1}(0) \setminus \bszero) / K]
$
of the complement of the origin $\bszero \in F^{-1}(0)$
by this action is a Calabi-Yau orbifold.
The {\em Berglund-H\"{u}bsch transpose} of $Y$
is another Calabi-Yau hypersurface $\Yv$ in a weighted projective space
defined by the Berglund-H\"{u}bsch transpose $\Fv$ of $F$.
It is conjectured \cite{Berglund-Hubsch}
that $(Y, \Yv)$ is a mirror pair,
so that there is an equivalence
\begin{align} \label{eq:hms_CY}
 D^b \coh Y \cong D^\pi \Fuk \Yv
\end{align}
of triangulated categories
between the derived category of coherent sheaves on $Y$
and the split-closed derived Fukaya category of $\Yv$
\cite{Kontsevich_HAMS, Seidel_K3}.

The weighted projective hypersurface $\Yv$ is
a compactification of the Milnor fiber
$
 \Yvo = \Yv \setminus \{ x_{d+2} = 0 \}
  \cong \fv^{-1}(-1)
$
of $\fv$.
Let $(V_i)_{i=1}^m$ be a distinguished basis
of vanishing cycles of $\fv$,
considered as Lagrangian submanifolds of $\Yvo$.
Let further $\dirscF$ be the directed subcategory of $\Fuk \Yvo$
consisting of $(V_i)_{i=1}^m$;
it is an $A_\infty$-subcategory with
$$
 \hom_{\dirscF}(V_i, V_j) =
\begin{cases}
 \bsk \cdot \id_{V_i} & i = j, \\
 \hom_{\Fuk \Yvo}(V_i, V_j) & i < j, \\
 0 & \text{otherwise},
\end{cases}
$$
and $A_\infty$-operations on $\dirscF$ are
inherited from those of $\Fuk \Yvo$ in the obvious way.
The Picard-Lefschetz theory
\cite[Theorem 18.24]{Seidel_PL} gives a derived equivalence
$$
 D^b \dirscF \cong D^b \Fuk \fv
$$
of $\dirscF$
and the Fukaya-Seidel category of $\fv$.
Although the Fukaya category $\Fuk \Yv$ is a deformation
of $\Fuk \Yvo$
\cite{Seidel_FCD}
and the $A_\infty$-operations on $\Fuk \Yv$
is difficult to compute explicitly,
Poincar\'{e} duality tells us
that the space of morphisms in $\Fuk \Yv$
is given by the {\em Calabi-Yau completion}
\begin{align} \label{eq:duality1}
 \Hom_{\Fuk \Yv}^i(V_j, V_k)
  = \Hom_{\dirscF}^i(V_j, V_k) \oplus \Hom_{\dirscF}^{d-i}(V_k, V_j)^\vee
\end{align}
of the space of morphisms in $\dirscF$.

If we assume homological mirror symmetry
\eqref{eq:hms_sing}
for singularities,
then $D^b_\sing(\gr \Rbar)$ has
a full exceptional collection $(E_i)_{i=1}^m$
corresponding to $V_i$,
and Theorem \ref{th:main} shows that
\begin{itemize}
 \item
the full subcategory of
$D^b_\sing(\gr \Sbar) \cong D^b \coh Y$
consisting of $(\Phi_\sing(E_i))_{i=1}^m$
is a Calabi-Yau completion
of the full subcategory $\dirscE$ of $D^b_\sing(\gr \Rbar)$
consisting of $(E_i)_{i=1}^m$, and
 \item
$\dirscE$ is the directed subcategory of $D^b_\sing(\gr \Sbar)$
consisting of $(\Phi_\sing(E_i))_{i=1}^m$.
\end{itemize}
Now the compatibility of homological mirror symmetry
for Calabi-Yau manifolds and that for singularities
is the existence of a commutative diagram
\begin{align} \label{eq:diagram}
\begin{array}{ccc}
 \dirscF & \hookrightarrow & D^\pi \Fuk \Yv \\
  \rotatebox{90}{$\cong$} & & \rotatebox{90}{$\cong$} \\
 \dirscE & \hookrightarrow & D^b  \coh Y
\end{array}
\end{align}
where horizontal arrows are embeddings of directed subcategories
and vertical equivalences are homological mirror symmetry.
Moreover,
we expect that the images of the horizontal arrows
split-generate the categories on the right,
so that one can divide
the proof of homological mirror symmetry
for Calabi-Yau manifolds
into two steps;
the first step is the proof of homological mirror symmetry
for singularities,
and the second step is the analysis of Calabi-Yau completion.
This is analogous
to the proof of homological mirror symmetry
for Calabi-Yau hypersurfaces
in projective spaces,
which can be interpreted
as first proving homological mirror symmetry
for the ambient projective space,
and then passing to the Calabi-Yau hypersurface
by taking a non-trivial Calabi-Yau completion
\cite{Seidel_K3,Sheridan_CY,Nohara-Ueda_HMSQ}.

This paper is organized as follows:
In Section \ref{sc:gmf},
we recall basic definitions on matrix factorizations.
In Section \ref{sc:pmf},
we give an explicit description of the push-forward
in terms of matrix factorizations,
which will be used in Section \ref{sc:mpf}
to prove Theorem \ref{th:main}.

{\bf Acknowledgment:}
We thank Yukinobu Toda for pointing out an error
in \eqref{eq:ext_coh} in the earlier version.
This research is supported by Grant-in-Aid for Young Scientists
(No.~24740043).

\section{Graded matrix factorizations} \label{sc:gmf}

Let $R$ be a graded regular ring with $a$-invariant $r$
and Krull dimension $d$, and
$f \in R_h$ be a homogeneous element of degree $h$.
We assume that $R$ is graded by $\bZ$
for simplicity of exposition,
although our discussion easily generalizes
to grading by any abelian group.

A {\em graded matrix factorization}
\begin{align*}
 \\[-3mm]
\begin{psmatrix}[mnode=r]
 [ref=r] \bigoplus_{i=1}^m R(e_i)
  & &
 [ref=l] \bigoplus_{i=1}^m R(d_i)
\end{psmatrix}
\psset{shortput=nab,arrows=->,labelsep=3pt,nodesep=3pt,offset=4pt}
\ncarc[arcangle=20]{1,1}{1,3}^{\phi}
\ncarc[arcangle=20]{1,3}{1,1}^{\psi}
 \\[0mm]
\end{align*}
of $f$ over $R$ consists of
\begin{itemize}
 \item
a pair
$
 M_0 = \bigoplus_{i=1}^m R(d_i)
$
and
$
 M_1 = \bigoplus_{i=1}^m R(e_i)
$
of graded free $R$-module
and
 \item
a pair $(\phi, \psi)$ of morphisms
$
 \phi : M_1 \to M_0
$
and
$
 \psi : M_0(-h) \to M_1
$
of graded modules
\end{itemize}
satisfying
$
 \phi \circ \psi = f \cdot \id_{M^0}
$
and
$
 \psi \circ \phi = f \cdot \id_{M^1}.
$
The integers $d_i$ and $e_i$ are shifts
in the internal grading of $R$-modules,
and the natural number $m = \rank M^0 = \rank M^1$ is
called the {\em rank} of the matrix factorization.

A {\em morphism} of graded matrix factorizations
from $(\phi, \psi) : M_1 \rightleftarrows M_0$
to $(\phi', \psi') : M_1' \rightleftarrows M_0'$
is a pair $(\alpha, \beta)$ of morphisms
$
 \alpha : M_0 \to M'_0
$
and
$
 \beta : M_1 \to M'_1
$
making the diagram
$$
\begin{CD}
 M_0(-h) @>{\psi}>> M_1@>{\phi}>> M_0 \\
  @V{\alpha}VV @V{\beta}VV @V{\alpha}VV \\
 M'_0(-h) @>{\psi'}>> M'_1@>{\phi'}>> M'_0
\end{CD}
$$
commute;
$
 \alpha \circ \phi = \phi' \circ \beta
$
and
$
 \beta \circ \psi = \psi' \circ \alpha.
$
Here, the morphism $M_0(-h) \to M'_0(-h)$
of graded $R$-modules
corresponding to the morphism $\alpha : M_0 \to M'_0$
is denoted by the same symbol.

Two morphisms $(\alpha, \beta)$ and $(\alpha', \beta')$
are {\em homotopic}
if there exist morphisms
$
 \xi : M_0 \to M'_1
$
and
$
 \eta : M_1 \to M'_0
$
satisfying
$
 \alpha - \alpha' = \phi' \circ \xi + \eta \circ \psi
$
and
$
 \beta - \beta' = \psi' \circ \eta + \xi \circ \phi.
$

The {\em homotopy category of graded matrix factorizations}
of $f$ over $R$
is the category $\Hmf_R^{\gr} f$
whose objects are graded matrix factorizations of $f$ over $R$ and
whose morphisms are morphisms of graded matrix factorizations
up to homotopy.

The category $\Hmf_R^{\gr} f$ is equivalent
to the {\em stable category}
$\underline{\CM}^{\gr} \Rbar$
of graded Cohen-Macaulay modules
over the quotient ring $\Rbar = R / (f)$
\cite{Eisenbud_HACI},
which in turn is equivalent to the graded stable derived category
$D^b_\sing(\gr \Rbar)$
of $\Rbar$
\cite{
Buchweitz_MCM,
Orlov_TCS}.

\section{The push-forward of a matrix factorization}
 \label{sc:pmf}

We keep the notations from Section \ref{sc:gmf}.
Let $S = R \otimes \bsk[w]$ be another graded ring
with $\deg w = a$,
and $F = f + w g$ be a homogeneous element of $S$
such that $g \in w S$.
The corresponding quotient ring will be denoted
by $\Sbar = S / (F)$,
and one has natural injections $R \hookrightarrow S$ and
$\Rbar \hookrightarrow \Sbar$.
The natural ring homomorphism
$\Sbar \to \Sbar / (w) \cong \Rbar$
induces the push-forward functor
$$
 \Phi_\sing : D^b_\sing(\gr \Rbar)
  \to D^b_\sing(\gr \Sbar),
$$
since $\Rbar$ is perfect as an $\Sbar$-module.
\begin{proposition} \label{pr:mf_push-forward}
The functor
$$
 \Phi_{\Hmf} : \Hmf_R^{\gr} f \to \Hmf_S^{\gr} F
$$
corresponding to the push-forward functor
$$
 \Phi_\sing : D^b_\sing(\gr \Rbar)
  \to D^b_\sing(\gr \Sbar)
$$
sends the graded matrix factorization
\begin{align} \label{eq:mf1}
\begin{CD}
 \bigoplus_{i=1}^m R(d_i - h)
  @>{\psi}>>
 \bigoplus_{i=1}^m R(e_i)
  @>{\phi}>>
 \bigoplus_{i=1}^m R(d_i) 
\end{CD}
\end{align}
of $f$ over $R$
to the graded matrix factorization
\begin{align} \label{eq:mf2}
\begin{CD}
\begin{array}{c}
 \bigoplus_{i=1}^m S(d_i-h) \\ \oplus \\ \bigoplus_{i=1}^m S(e_i-a)
\end{array}
 @>{
\psitilde =
\scriptsize
\begin{pmatrix}
 \psi & -w \\
 g & \phi
\end{pmatrix}
 }>>
\begin{array}{c}
 \bigoplus_{i=1}^m S(e_i) \\ \oplus \\ \bigoplus_{i=1}^m S(d_i-a)
\end{array}
 @>{
\phitilde =
\scriptsize
\begin{pmatrix}
 \phi &  w \\
 -g & \psi
\end{pmatrix}
}>>
\begin{array}{c}
 \bigoplus_{i=1}^m S(d_i) \\ \oplus \\ \bigoplus_{i=1}^m S(e_i+h-a),
\end{array}
\end{CD}
\end{align}
of $F$ over $S$,
where $\phi, \psi \in R$ are considered as elements of $S$
by the injection $R \hookrightarrow S$.
\end{proposition}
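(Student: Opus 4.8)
The plan is to verify two things: that \eqref{eq:mf2} really is a graded matrix factorization of $F$ over $S$, and that the functor it defines is the one induced by the push-forward.

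First I would check that \eqref{eq:mf2} is a graded matrix factorization. The internal degree shifts displayed are forced by $\deg w=a$ and $\deg g=h-a$ (the latter because $g\in wS$ and $wg$ has the same degree $h$ as $f$), together with $\deg\phi_{ij}=d_i-e_j$ and $\deg\psi_{ij}=e_i-d_j+h$; in particular $\widetilde{\phi}$ and $\widetilde{\psi}$ are morphisms of graded modules with the stated source and target, and the target of $\widetilde{\phi}$, shifted by $(-h)$, is the source of $\widetilde{\psi}$. Treating $w$ and $g$ as central scalars and using $\phi\psi=\psi\phi=f\cdot\operatorname{id}$ and $F=f+wg$, one computes
$$
\widetilde{\phi}\,\widetilde{\psi}=\begin{pmatrix}\phi&w\\-g&\psi\end{pmatrix}\begin{pmatrix}\psi&-w\\g&\phi\end{pmatrix}=\begin{pmatrix}\phi\psi+wg&0\\0&\psi\phi+wg\end{pmatrix}=F\cdot\operatorname{id},
$$
and similarly $\widetilde{\psi}\,\widetilde{\phi}=F\cdot\operatorname{id}$. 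Since $w,g\in\frakm_S$, \eqref{eq:mf2} has no contractible direct summand. The same block formulas applied to a morphism $(\alpha,\beta)$ of matrix factorizations of $f$ (keeping $w$ and $g$ in the off-diagonal entries) carry morphisms to morphisms and homotopies to homotopies, so \eqref{eq:mf2} is the object part of an exact functor $\Hmf^{\gr}_R f\to\Hmf^{\gr}_S F$.

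Next I would identify this functor with $\Phi_{\Hmf}$. Under the equivalence $\Hmf^{\gr}_R f\simeq D^b_\sing(\gr\Rbar)$, the matrix factorization \eqref{eq:mf1} corresponds to the maximal Cohen--Macaulay $\Rbar$-module $N:=\operatorname{coker}\bigl(\bar\phi\colon M_1\otimes_R\Rbar\to M_0\otimes_R\Rbar\bigr)$, and $\Phi_\sing$ sends $N$ to the same module viewed as a graded $\Sbar$-module via $\Sbar\twoheadrightarrow\Rbar\cong\Sbar/(w)$. It therefore suffices to show that \eqref{eq:mf2} is the matrix factorization of $F$ over $S$ attached to this $\Sbar$-module. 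Since $w$ annihilates $N$, one has $N\cong(M_0\otimes_R S)/\bigl(\phi(M_1\otimes_R S)+w\,(M_0\otimes_R S)\bigr)$, and $N$ is resolved over $S$ by the total complex of the Koszul complex on $w$ applied to $0\to M_1\otimes_R S\xrightarrow{\phi}M_0\otimes_R S\to 0$. Because $\phi$ is injective with $\psi\phi=f\cdot\operatorname{id}$, this is the length-two minimal free resolution
$$
0\longrightarrow\bigoplus_{i=1}^m S(e_i-a)\xrightarrow{\binom{w}{-\phi}}\Bigl(\bigoplus_{i=1}^m S(e_i)\Bigr)\oplus\Bigl(\bigoplus_{i=1}^m S(d_i-a)\Bigr)\xrightarrow{(\,\phi\ \ w\,)}\bigoplus_{i=1}^m S(d_i)\longrightarrow N\longrightarrow 0 .
$$
Since $F\cdot N=0$, multiplication by $F$ on this resolution is null-homotopic; one checks that $\binom{\psi}{g}$ in homological degree zero and $(\,g\ \ {-\psi}\,)$ in homological degree one give such a null-homotopy. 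Feeding this resolution and null-homotopy into the standard construction of the eventually two-periodic minimal free $\Sbar$-resolution of an $\Sbar$-module over the hypersurface $\Sbar\subset S$ yields a period-two pair of block matrices which, after an obvious change of basis and grading shift, is \eqref{eq:mf2}. Together with the morphism-level statement above, this identifies the functor defined by \eqref{eq:mf2} with $\Phi_{\Hmf}$.

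The main obstacle is this last step: correctly assembling the two-periodic $\Sbar$-resolution from the $S$-resolution and the null-homotopy, and matching the resulting matrix factorization with \eqref{eq:mf2} up to isomorphism and a grading shift, while keeping all internal-degree conventions consistent. Conceptually the answer is forced: the block shape of $(\widetilde{\phi},\widetilde{\psi})$ is the Kn\"orrer-periodicity formula for the factorization $F=f+w\cdot g$, pushed forward along the ring homomorphism $R[u,v]/(f+uv)\to S$, $u\mapsto w$, $v\mapsto g$ --- but turning this into a clean verification, with the precise shifts of \eqref{eq:mf2}, is where the computation lies; everything else is elementary linear algebra over graded rings.
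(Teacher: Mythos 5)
Your argument is correct, but it reaches \eqref{eq:mf2} by a genuinely different mechanism than the paper. Both proofs make the same initial reduction: \eqref{eq:mf1} corresponds to the module $\Mbar = \coker \phibar$, the push-forward simply regards $\Mbar$ as a graded $\Sbar$-module, and one must identify the matrix factorization of $F$ attached to that module. From there the paper takes the $2$-periodic $\Rbar$-free resolution of $\Mbar$, replaces each $\Rbar(\ell)$ by its $\Sbar$-free resolution $0 \to \Sbar(\ell-a) \xto{w} \Sbar(\ell) \to \Rbar(\ell) \to 0$, and totalizes (the off-diagonal entries $\pm g$ in $\phitilde$, $\psitilde$ being exactly the correction terms forced by the fact that the lifted differentials no longer square to zero over $\Sbar$); the resulting $\Sbar$-resolution is $2$-periodic after one step, and \eqref{eq:mf2} is read off from its tail. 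You instead resolve $\Mbar$ over the ambient ring $S$ by the finite Koszul-type complex built from $\phi$ and $w$, exhibit an explicit null-homotopy $s$ for multiplication by $F$, and invoke the Eisenbud--Shamash construction to produce the eventually $2$-periodic $\Sbar$-resolution. Your data do check out: $s_0 d_1 + d_2 s_1 = F$, the composite $s_1 s_0 = g\psi - \psi g = 0$ (so no higher homotopies are needed), and the resulting factorization, with $\phitilde$ built from $d_1$ and $s_1$ and $\psitilde$ from $s_0$ and $d_2$, coincides with \eqref{eq:mf2} after changing the sign of the second block of the target module $\bigoplus_i S(d_i) \oplus \bigoplus_i S(e_i+h-a)$ --- in fact no grading shift is needed, since the internal shifts already match on the nose. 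Your route has the advantage of being entirely finite (a length-two resolution plus two matrix identities) and of making the Kn\"{o}rrer-periodicity shape of \eqref{eq:mf2} conceptually transparent; the paper's route directly exhibits the $\Sbar$-projective resolution, which is what the comparison with $D^b_\sing(\gr \Sbar)$ literally requires. Your additional verifications --- that \eqref{eq:mf2} squares to $F \cdot \id$ and that the block formulas carry morphisms and homotopies functorially --- are not in the paper's proof but are harmless; the remark about contractible summands is not needed, since such summands are zero objects in $\Hmf_S^{\gr} F$ in any case.
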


\begin{proof}
Recall from \cite{Eisenbud_HACI}
that the matrix factorization \eqref{eq:mf1} corresponds
to the $\Rbar$-module
$\Mbar = \coker \phibar$
through the 2-periodic projective resolution
\begin{align} \label{eq:resolution1}
\begin{CD}
 \cdots
  @>{\psibar}>>
 \bigoplus_{i=1}^m \Rbar(e_i - h)
  @>{\phibar}>>
 \bigoplus_{i=1}^m \Rbar(d_i - h)
  @>{\psibar}>>
 \bigoplus_{i=1}^m \Rbar(e_i) \\
  @.
  @>{\phibar}>>
 \bigoplus_{i=1}^m \Rbar(d_i) 
  @>>>
 \Mbar
  @>>>
 0,
\end{CD}
\end{align}
where $\psibar$ and $\phibar$ are morphisms of $\Rbar$-modules
induced by $\psi$ and $\phi$.
By replacing each free $\Rbar$-modules
in \eqref{eq:resolution1}
with their $\Sbar$-free resolutions
$$
 0 \to \Sbar(\ell-a) \xto{w} \Sbar(\ell) \to \Rbar(\ell) \to 0,
$$
one obtains a projective resolution
\begin{align*}
\begin{CD}
 \cdots
 @>{\psibartilde}>>
\begin{array}{c}
 \bigoplus_{i=1}^m \Sbar(e_i-h) \\ \oplus \\ \bigoplus_{i=1}^m \Sbar(d_i-a-h),
\end{array}
 @>{\phibartilde}>>
\begin{array}{c}
 \bigoplus_{i=1}^m \Sbar(d_i-h) \\ \oplus \\ \bigoplus_{i=1}^m \Sbar(e_i-a)
\end{array} \\
 @>{\psibartilde}>>
\begin{array}{c}
 \bigoplus_{i=1}^m \Sbar(e_i) \\ \oplus \\ \bigoplus_{i=1}^m \Sbar(d_i-a)
\end{array}
 @>{\scriptsize
\begin{pmatrix}
 \phibar &  w
\end{pmatrix}
}>>
\begin{array}{c}
 \bigoplus_{i=1}^m \Sbar(d_i)
\end{array}
 @>>> \Mbar @>>> 0
\end{CD}
\end{align*}
of $\Mbar$
as an $\Sbar$-module,
where $\phibartilde$ and $\psibartilde$
are morphisms of $\Sbar$-modules
induced by $\phitilde$ and $\psitilde$.
This complex is 2-periodic except for the first two terms,
and clearly corresponds to the matrix factorization
\eqref{eq:mf2},
so that Proposition \ref{pr:mf_push-forward} is proved.
\end{proof}

\section{Morphisms between push-forwards}
 \label{sc:mpf}

We keep the notations from Section \ref{sc:pmf}.
The following proposition gives Theorem \ref{th:main}
written in terms of matrix factorizations.

\begin{proposition} \label{pr:hom}
Let $\scE = (\phi, \psi)$ and $\scE' = (\phi', \psi')$
be matrix factorizations of $f$ over $R$ and
$\scF = \Phi_{\Hmf}(\scE)$ and $\scF' = \Phi_{\Hmf}(\scE')$ be their push-forwards.
Then one has
$$
 \Hom(\scF, \scF')
  \cong \Hom(\scE, \scE')
   \oplus \Hom(\scE', \scE(r+h-a))^\vee[-d].
$$
\end{proposition}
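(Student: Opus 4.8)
The strategy is to compute the morphism space $\Hom(\scF, \scF')$ in $\Hmf_S^{\gr} F$ directly from the explicit block form of the push-forwards given in Proposition \ref{pr:mf_push-forward}, and to identify the resulting complex with the mapping cone of a duality map. First I would write down the $\Hom$-complex of graded matrix factorizations: for matrix factorizations $(\Phitilde, \Psitilde)$ and $(\Phitilde', \Psitilde')$, morphisms up to homotopy are the cohomology of a $2$-periodic complex whose terms are $\Hom_S(M_0, M_0') \oplus \Hom_S(M_1, M_1')$ and whose differential is built from pre- and post-composition with $\Phitilde, \Psitilde$ (this is standard, and can be cited from \cite{Eisenbud_HACI} or recovered from the homotopy relations in Section \ref{sc:gmf}). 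Since the underlying modules of $\scF, \scF'$ are $2\times 2$ block sums of $R$-free modules tensored up to $S$, each hom-space splits into a $2 \times 2$ matrix of blocks, and the differential $\dtilde_{\scF,\scF'}$ acquires a corresponding block shape determined by $\phitilde = \begin{pmatrix}\phi & w \\ -g & \psi\end{pmatrix}$ and $\psitilde = \begin{pmatrix}\psi & -w \\ g & \phi\end{pmatrix}$.

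Next I would exploit the hypothesis $g \in wS$ together with $S = R[w]$ to decompose the $S$-hom-complex along powers of $w$. Writing an $S$-linear map between $R$-free-based modules as a polynomial in $w$ with $R$-linear coefficients, one sees that the "diagonal" blocks (maps $M_0^{\scF} \to M_0^{\scF'}$ built from the $R$-part) reproduce the $\Hom$-complex of $\scE$ with $\scE'$, while the "anti-diagonal" blocks (maps landing in the $S(e_i - a)$ or $S(d_i - a)$ summands, where the internal shift by $-a$ is exactly the degree of $w$) contribute a copy of the $\Hom$-complex of $\scE'$ with $\scE$, but twisted: the shift bookkeeping $d_i - a$ versus $e_i + h - a$ produces the internal twist $\scE(r + h - a)$ once one remembers that the $a$-invariant $r$ records the dualizing shift of $R$. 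The cross terms involving $w$ and $g$ in $\phitilde, \psitilde$ assemble these two pieces into a single short exact sequence of complexes $0 \to \Hom(\scE,\scE') \to \Hom(\scF,\scF') \to \Hom(\scE', \scE(r+h-a))^\vee[-d] \to 0$, i.e. exhibit $\Hom(\scF,\scF')$ as an extension. To see that this sequence splits — or more precisely that the induced connecting map vanishes on cohomology — I would invoke the directed structure: the $w$-multiplication and $g$-multiplication maps are nilpotent-type perturbations, and a filtration-degeneration argument (or a direct homotopy) shows the spectral sequence collapses.

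The key input making the dual summand appear is graded local duality for the regular ring $R$: because $R$ has finite global dimension and $a$-invariant $r$ and Krull dimension $d$, one has $\RHom_R(-, R) \cong (-)^\vee[-d](-r)$ on the relevant stable category, equivalently $\RHom_{\Rbar}(\bsk, \Rbar) \cong \bsk(-r)[-d-1]$ as recalled in the introduction; this is what converts the $w$-adic dual piece of the $S$-hom-complex into $\Hom_{\Hmf_R^{\gr} f}(\scE', \scE(r+h-a))^\vee[-d]$. I expect the main obstacle to be precisely the splitting: tracking all the internal-grading shifts through the block matrices is bookkeeping, but showing that the extension class of the short exact sequence of $\Hom$-complexes is trivial (rather than merely that the two outer terms have the stated dimensions) requires either constructing an explicit contracting homotopy for the cross terms or arguing via the naturality of local duality that the connecting homomorphism is forced to be zero. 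Once that is settled, passing from Proposition \ref{pr:hom} back to Theorem \ref{th:main} is immediate via the equivalence $\Hmf_R^{\gr} f \simeq D^b_\sing(\gr \Rbar)$ recalled at the end of Section \ref{sc:gmf} and the identification of $\Phi_{\Hmf}$ with $\Phi_\sing$ from Proposition \ref{pr:mf_push-forward}.
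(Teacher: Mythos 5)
Your overall framework --- computing $\Hom(\scF,\scF')$ from the explicit $2\times 2$ block form of Proposition \ref{pr:mf_push-forward} and separating the $R$-linear part of an $S$-linear map from its $w$-divisible part --- is the same computational core as the paper's proof. But the proposal has genuine gaps at exactly the points where the work lies. First, you package the answer as a short exact sequence of $\Hom$-complexes and then concede that the splitting (vanishing of the connecting map, degeneration of the spectral sequence) is ``the main obstacle,'' offering two candidate strategies without carrying either out. The paper never faces this extension problem: it normalizes a representative $(\alpha,\beta)$ by explicit homotopies (choosing $\xi_3,\xi_4,\eta_3,\eta_4$ to kill the $w$-dependence of $\alpha_1,\alpha_2,\beta_1,\beta_2$), after which the cocycle equations force $\beta_4=\alpha_1$, $\alpha_4=\beta_1$, $\beta_3=-(g/w)\alpha_2$, $\alpha_3=-(g/w)\beta_2$, and the remaining conditions on $(\alpha_1,\beta_1)$ and on $(\alpha_2,\beta_2)$ decouple completely; the same decoupling happens for homotopies, so the direct sum holds on the nose with no extension class to kill. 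Without this (or an equivalent) argument the proof is incomplete.

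Second, you misidentify where and how the dual vector space enters. The block decomposition does not produce ``a copy of the $\Hom$-complex of $\scE'$ with $\scE$'' in dualized form: the anti-diagonal components $(\alpha_2,\beta_2)$ are still maps from (shifts of) the modules underlying $\scE$ to those underlying $\scE'$, and what the computation yields is the honest morphism space $\Hom(\scE[1](-a),\scE')$ in $\Hmf_R^{\gr} f$. The dual appears only afterwards, by applying Serre duality $\Hom(\scM,\scN)\cong\Hom(\scN,\scM(r+h)[d-1])^\vee$ in the stable category of the hypersurface ring $\Rbar$ (graded Auslander--Reiten duality, \cite[Corollary 2.5]{Iyama-Takahashi_TCTQS}); note that the relevant twist is by $r+h$, the $a$-invariant of $\Rbar$, not by $r$. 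Invoking ``graded local duality for the regular ring $R$'' inside the chain-level computation is not the right statement and, as written, would not produce the twist $\scE(r+h-a)$ or the homological shift $[-d]$. The final reduction to Theorem \ref{th:main} via the equivalence $\Hmf_R^{\gr} f\simeq D^b_\sing(\gr\Rbar)$ is fine.
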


\begin{proof}
An element of $\Hom(\scF, \scF')$ is represented
by a pair $(\alpha, \beta)$ of morphisms
\begin{align*}
 \alpha = 
\begin{pmatrix}
 \alpha_1 & \alpha_2 \\
 \alpha_3 & \alpha_4
\end{pmatrix}
 :
\begin{array}{c}
 \bigoplus_{i=1}^m S(d_i) \\ \oplus \\ \bigoplus_{i=1}^m S(e_i+h-a)
\end{array}
 \lto
\begin{array}{c}
 \bigoplus_{i=1}^m S(d'_i) \\ \oplus \\ \bigoplus_{i=1}^m S(e'_i+h-a)
\end{array}
\end{align*}
and
\begin{align*}
 \beta = 
\begin{pmatrix}
 \beta_1 & \beta_2 \\
 \beta_3 & \beta_4
\end{pmatrix}
 :
\begin{array}{c}
 \bigoplus_{i=1}^m S(e_i) \\ \oplus \\ \bigoplus_{i=1}^m S(d_i-a)
\end{array}
 \lto
\begin{array}{c}
 \bigoplus_{i=1}^m S(e'_i) \\ \oplus \\ \bigoplus_{i=1}^m S(d'_i-a)
\end{array}
\end{align*}
making the diagram
\begin{align*}
\begin{CD}
\begin{array}{c}
 \bigoplus_{i=1}^m S(d_i-h) \\ \oplus \\ \bigoplus_{i=1}^m S(e_i-a)
\end{array}
 @>{\psitilde}>>
\begin{array}{c}
 \bigoplus_{i=1}^m S(e_i) \\ \oplus \\ \bigoplus_{i=1}^m S(d_i-a)
\end{array}
 @>{\phitilde}>>
\begin{array}{c}
 \bigoplus_{i=1}^m S(d_i) \\ \oplus \\ \bigoplus_{i=1}^m S(e_i+h-a)
\end{array} \\
 @V{\alpha}VV
 @V{\beta}VV
 @V{\alpha}VV \\
\begin{array}{c}
 \bigoplus_{i=1}^m S(d'_i-h) \\ \oplus \\ \bigoplus_{i=1}^m S(e'_i-a)
\end{array}
 @>{\psitilde'}>>
\begin{array}{c}
 \bigoplus_{i=1}^m S(e'_i) \\ \oplus \\ \bigoplus_{i=1}^m S(d'_i-a)
\end{array}
 @>{\phitilde'}>>
\begin{array}{c}
 \bigoplus_{i=1}^m S(d'_i) \\ \oplus \\ \bigoplus_{i=1}^m S(e'_i+h-a)
\end{array} 
\end{CD}
\end{align*}
%
%
commute;
$
  \alpha \circ \phitilde = \phitilde' \circ \beta
$
and
$
  \beta \circ \psitilde = \psitilde' \circ \alpha.
$
The former condition can be written explicitly as the equality
\begin{align} \label{eq:cocycle1}
\begin{pmatrix}
 \alpha_1 \phi - \alpha_2 g & \alpha_1 w + \alpha_2 \psi \\
 \alpha_3 \phi - \alpha_4 g & \alpha_3 w + \alpha_4 \psi
\end{pmatrix}
 =
\begin{pmatrix}
 \phi' \beta_1 + w \beta_3 & \phi' \beta_2 + w \beta_4 \\
 - g \beta_1 + \psi' \beta_3 & - g \beta_2 + \psi' \beta_4
\end{pmatrix}
\end{align}
where
\begin{align*}
 \alpha \circ \phitilde =
\begin{pmatrix}
 \alpha_1 & \alpha_2 \\
 \alpha_3 & \alpha_4 
\end{pmatrix}
\begin{pmatrix}
 \phi & w \\
 - g & \psi 
\end{pmatrix}
 &=
\begin{pmatrix}
 \alpha_1 \phi - \alpha_2 g & \alpha_1 w + \alpha_2 \psi \\
 \alpha_3 \phi - \alpha_4 g & \alpha_3 w + \alpha_4 \psi
\end{pmatrix}
\end{align*}
and
\begin{align*}
 \phitilde' \circ \beta =
\begin{pmatrix}
 \phi' & w \\
 -g & \psi'
\end{pmatrix}
\begin{pmatrix}
 \beta_1 & \beta_2 \\
 \beta_3 & \beta_4
\end{pmatrix}
 &=
\begin{pmatrix}
 \phi' \beta_1 + w \beta_3 & \phi' \beta_2 + w \beta_4 \\
 - g \beta_1 + \psi' \beta_3 & - g \beta_2 + \psi' \beta_4
\end{pmatrix},
\end{align*}
and similarly as the equality
\begin{align} \label{eq:cocycle2}
\begin{pmatrix}
 \beta_1 \psi + \beta_2 g & - \beta_1 w + \beta_2 \phi \\
 \beta_3 \psi + \beta_4 g & - \beta_3 w + \beta_4 \phi
\end{pmatrix}
 =
\begin{pmatrix}
 \psi' \alpha_1 - w \alpha_3 & \psi' \alpha_2 - w \alpha_4 \\
 g \alpha_1 + \phi' \alpha_3 & g \alpha_2 + \phi' \alpha_4
\end{pmatrix}
\end{align}
where
\begin{align*}
 \beta \circ \psitilde =
\begin{pmatrix}
 \beta_1 & \beta_2 \\
 \beta_3 & \beta_4 
\end{pmatrix}
\begin{pmatrix}
 \psi & -w \\
 g & \phi 
\end{pmatrix}
 &=
\begin{pmatrix}
 \beta_1 \psi + \beta_2 g & - \beta_1 w + \beta_2 \phi \\
 \beta_3 \psi + \beta_4 g & - \beta_3 w + \beta_4 \phi
\end{pmatrix}
\end{align*}
and
\begin{align*}
 \psitilde' \circ \alpha =
\begin{pmatrix}
 \psi' & - w \\
 g & \phi'
\end{pmatrix}
\begin{pmatrix}
 \alpha_1 & \alpha_2 \\
 \alpha_3 & \alpha_4
\end{pmatrix}
 &=
\begin{pmatrix}
 \psi' \alpha_1 - w \alpha_3 & \psi' \alpha_2 - w \alpha_4 \\
 g \alpha_1 + \phi' \alpha_3 & g \alpha_2 + \phi' \alpha_4
\end{pmatrix}
\end{align*}
for the latter.
Two morphisms $(\alpha, \beta)$ and $(\alpha', \beta')$
are homotopic if there exist morphisms
\begin{align*}
 \xi = 
\begin{pmatrix}
 \xi_1 & \xi_2 \\
 \xi_3 & \xi_4
\end{pmatrix}
 :
\begin{array}{c}
 \bigoplus_{i=1}^m S(d_i) \\ \oplus \\ \bigoplus_{i=1}^m S(e_i+h-a)
\end{array}
 \to
\begin{array}{c}
 \bigoplus_{i=1}^m S(e'_i) \\ \oplus \\ \bigoplus_{i=1}^m S(d'_i-a)
\end{array}
\end{align*}
and
\begin{align*}
 \eta = 
\begin{pmatrix}
 \eta_1 & \eta_2 \\
 \eta_3 & \eta_4
\end{pmatrix}
 :
\begin{array}{c}
 \bigoplus_{i=1}^m S(e_i) \\ \oplus \\ \bigoplus_{i=1}^m S(d_i-a)
\end{array}
 \to
\begin{array}{c}
 \bigoplus_{i=1}^m S(d'_i) \\ \oplus \\ \bigoplus_{i=1}^m S(e'_i+h-a)
\end{array}
\end{align*}
satisfying
$
 \alpha - \alpha' = \phitilde' \circ \xi + \eta \circ \psitilde
$
and
$
 \beta - \beta' = \psitilde' \circ \eta + \xi \circ \phitilde.
$
%
The terms on the right hand sides
can be explicitly written as
\begin{align*}
 \phitilde' \circ \xi =
\begin{pmatrix}
 \phi' & w \\
 -g & \psi'
\end{pmatrix}
\begin{pmatrix}
 \xi_1 & \xi_2 \\
 \xi_3 & \xi_4
\end{pmatrix}
 &=
\begin{pmatrix}
 \phi' \xi_1 + w \xi_3 & \phi' \xi_2 + w \xi_4 \\
 - g \xi_3 + \psi' \xi_3 & - g \xi_2 + \psi' \xi_4
\end{pmatrix}, \\
 \eta \circ \psitilde =
\begin{pmatrix}
 \eta_1 & \eta_2 \\
 \eta_3 & \eta_4
\end{pmatrix}
\begin{pmatrix}
 \psi & -w \\
 g & \phi
\end{pmatrix}
 &=
\begin{pmatrix}
 \eta_1 \psi + \eta_2 g & - \eta_1 w + \eta_2 \phi \\
 \eta_3 \psi - \eta_4 g & - \eta_3 w + \eta_4 \phi
\end{pmatrix}, \\
 \psitilde' \circ \eta =
\begin{pmatrix}
 \psi' & -w \\
 g & \phi'
\end{pmatrix}
 \circ
\begin{pmatrix}
 \eta_1 & \eta_2 \\
 \eta_3 & \eta_4
\end{pmatrix}
 &=
\begin{pmatrix}
  \psi' \eta_1 - w \eta_3 & \psi' \eta_2 - w \eta_4 \\
  g \eta_1 + \phi' \eta_3 & g \eta_2 + \phi' \eta_4
\end{pmatrix}, \\
 \xi \circ \phitilde =
\begin{pmatrix}
 \xi_1 & \xi_2 \\
 \xi_3 & \xi_4
\end{pmatrix}
\begin{pmatrix}
 \phi & w \\
 -g & \psi
\end{pmatrix}
 &=
\begin{pmatrix}
 \xi_1 \phi - \xi_2 g & \xi_1 w + \xi_2 \psi \\
 \xi_3 \phi - \xi_4 g & \xi_3 w + \xi_4 \psi
\end{pmatrix}.
\end{align*}
Note that one can remove $w$-dependence
of $\alpha_1$
by choosing $\xi_3$ in a suitable way
and achieve $\alpha_1 \in \Mat_k(R)$.
Similarly,
$w$-dependence of $\alpha_2$,
$\beta_1$ and $\beta_2$ can be removed
by choosing $\xi_4$,
$\eta_3$ and $\eta_4$ respectively
in a suitable way.
Moreover,
these operation
of removing $w$-dependence
can be performed independently,
so that one can achieve
$\alpha_1, \alpha_2, \beta_1, \beta_2 \in \Mat_k(R)$
simultaneously.

Now the $(1,1)$-component of \eqref{eq:cocycle1}
gives the equation
$$
 \alpha_1 \phi - \alpha_2 g = \phi' \beta_1 + w \beta_3.
$$
Since $\alpha_1, \beta_1, \phi, \phi' \in \Mat_k(R)$
and $\alpha_2 g, w \beta_3 \in w \Mat_k(S)$
in the direct sum decomposition
$\Mat_k(S) = \Mat_k(R) \oplus w \Mat_k(S)$,
one obtains
\begin{align} \label{eq:cocycle2-1}
 \alpha_1 \phi = \phi' \beta_1
  \qquad \text{and} \qquad
 \beta_3 = - \frac{g}{w} \alpha_2.
\end{align}
Similarly,
from the $(1,2)$-component of \eqref{eq:cocycle1},
one obtains
$$
 \alpha_1 w + \alpha_2 \psi = \phi' \beta_2 + w \beta_4
$$
which,
together with $\alpha_2, \beta_2, \psi, \phi' \in \Mat_k(R)$,
implies
\begin{align} \label{eq:cocycle2-2}
 \alpha_2 \psi = \phi' \beta_2
  \qquad \text{and} \qquad
 \beta_4 = \alpha_1.
\end{align}
The same argument for $(1,1)$- and $(1,2)$-components
of \eqref{eq:cocycle2} gives
\begin{align} \label{eq:cocycle2-3}
 \beta_1 \psi = \psi' \alpha_1
  \qquad \text{and} \qquad
 \alpha_3 = - \frac{g}{w} \beta_2
\end{align}
and
\begin{align} \label{eq:cocycle2-4}
 \beta_2 \phi = \psi' \alpha_2
  \qquad \text{and} \qquad
 \alpha_4 = \beta_1
\end{align}
respectively.
This determines $\alpha$ and $\beta$ completely
from $(\alpha_1, \alpha_2, \beta_1, \beta_2)$
satisfying
\begin{align}
 \alpha_1 \phi &= \phi' \beta_1, &
 \alpha_2 \psi &= \phi' \beta_2, &
 \beta_1 \psi &= \psi' \alpha_1, &
 \beta_2 \phi &= \psi' \alpha_2.
\end{align}
Other components of \eqref{eq:cocycle1}
and \eqref{eq:cocycle2}
are automatically satisfied
for $\alpha$ and $\beta$
satisfying \eqref{eq:cocycle2-1},
\eqref{eq:cocycle2-2},
\eqref{eq:cocycle2-3}, and
\eqref{eq:cocycle2-4}.
This shows that any element of $\Hom(\scF, \scF')$
can be represented by a pair $(\alpha, \beta)$ of morphisms
coming from a representative
$(\alpha_1, \beta_1)$ of $\Hom(\scE, \scE')$
and a representative
$(\alpha_2, \beta_2)$ of $\Hom(\scE[1](-a), \scE')$.

If two morphisms from $\scF$ to $\scF'$
represented by pairs $(\alpha, \beta)$ and
$(\alpha', \beta')$
coming from representatives
$(\alpha_1, \beta_1)$ and $(\alpha_1', \beta_1')$
of $\Hom(\scE, \scE')$
and representatives
$(\alpha_2, \beta_2)$ and $(\alpha_2', \beta_2')$
of $\Hom(\scE[1](-a), \scE')$
are homotopic,
then one has two morphisms $\xi$ and $\eta$
satisfying
$
 \alpha - \alpha' = \phitilde' \circ \xi + \eta \circ \psitilde
$
and
$
 \beta - \beta' = \psitilde' \circ \eta + \xi \circ \phitilde.
$
By looking at the $\Mat_{2k}(R)$ component
of these equations
with respect to the direct sum decomposition
$\Mat_{2k}(S) = \Mat_{2k}(R) \oplus w \Mat_{2k}(S)$,
one finds that
$(\alpha_1, \beta_1)$ and $(\alpha_1', \beta_1')$ are homotopic over $R$,
and
$(\alpha_2, \beta_2)$ and $(\alpha_2', \beta_2')$ are homotopic over $R$
as well.
It follows that one has an isomorphism
$$
 \Hom(\scF, \scF')
  \cong \Hom(\scE, \scE') \oplus \Hom(\scE[1](-a), \scE')
$$
of spaces of morphisms.
Graded Auslander-Reiten duality
\cite{Auslander-Reiten_ASSZR}
implies Serre duality 
\begin{align*}
 \Hom(\scM, \scN)
  \cong \Hom(\scN, \scM(r+h)[d-1])^\vee
\end{align*}
in the graded stable derived category of $\Rbar$
\cite[Corollary 2.5]{Iyama-Takahashi_TCTQS},
so that
\begin{align*}
\Hom(\scE[1](-a), \scE')
  &\cong \Hom(\scE', \scE(r+h-a))^\vee[-d]
\end{align*}
and Proposition \ref{pr:hom} is proved.
\end{proof}

\bibliographystyle{amsalpha}
\bibliography{bibs}

\noindent
Kazushi Ueda

Department of Mathematics,
Graduate School of Science,
Osaka University,
Machikaneyama 1-1,
Toyonaka,
Osaka,
560-0043,
Japan.

{\em e-mail address}\ : \  kazushi@math.sci.osaka-u.ac.jp
\ \vspace{0mm} \\

\end{document}